\newtheorem{theorem}{Theorem}[section]
\newtheorem{lemma}[theorem]{Lemma}
\newtheorem{corollary}[theorem]{Corollary}
\theoremstyle{definition}     
\theoremstyle{remark}
\newtheorem{remark}[theorem]{Remark}
\numberwithin{equation}{section}
\begin{document}

\title[Projective surfaces with many nodes]{Projective surfaces with  many nodes}


\author{JongHae Keum}
\address{School of Mathematics, Korea Institute For Advanced Study, Seoul 130-722, Korea}
\email{jhkeum@kias.re.kr}
\thanks{Research supported by Basic Science Research Program through the National Research Foundation(NRF)
of Korea funded by the Ministry of education, Science and
Technology (KRF-2007-C00002)}


\subjclass[2000]{Primary 14J17, 14J26, 14J28, 14J29}

\date{August 2009, revised December 2009}

\keywords{node, nodal curve, ruled surface, bi-elliptic surface,
Enriques surface, elliptic surface, surface of general type,
Bogomolov-Miyaoka-Yau inequality}

\begin{abstract}
We prove that a smooth projective complex surface $X$, not
necessarily minimal, contains $h^{1,1}(X)-1$ disjoint
$(-2)$-curves if and only if $X$ is isomorphic to the minimal
rational ruled surface ${\bf F}_2$ or ${\bf P}^2$ or a fake
projective plane.

We also describe smooth projective complex surfaces $X$ with
$h^{1,1}(X)-2$ disjoint $(-2)$-curves.
\end{abstract}

\maketitle



\section{Introduction}
Throughout this paper, we work over the field $\mathbb{C}$ of
complex numbers.

A smooth rational curve on a surface with self-intersection $-2$
is called a $(-2)$-curve or a nodal curve as it may be contracted
to give a nodal singularity (conical double point). For a smooth
surface $X$, we denote by $\mu(X)$ the maximum of the cardinality
of a set of disjoint $(-2)$-curves on $X$. Hodge index theorem
implies that
$$\mu(X)\le \rho(X)-1\le h^{1,1}(X)-1,$$ in particular, $X$ contains at most $h^{1,1}(X)-1$ disjoint nodal curves,
where $\rho(X)$ denotes the Picard number and $h^{1,1}(X)$ the
(1,1)-th Hodge number of $X$.

A result of I. Dolgachev, M. Mendes Lopes, and R. Pardini gives a
classification of smooth projective complex surfaces $X$ with
$q(X)=p_g(X)=0$ containing $\rho(X)-1$ disjoint nodal curves
(\cite{DLP}, Theorem 3.3 and Proposition 4.1).

\begin{theorem}\label{dlp}\cite{DLP} Let $X$ be a smooth projective surface,
not necessarily minimal, with $q(X)=p_g(X)=0$. Then
$\mu(X)=h^{1,1}(X)-1$ if and only if $X$ is isomorphic to the
minimal rational ruled surface ${\bf F}_2$  or the complex
projective plane ${\bf P}^2$ or a fake projective plane.
\end{theorem}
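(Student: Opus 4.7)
The "if" direction is immediate from inspection. For the converse, let $D_1,\dots,D_n$ be $n = h^{1,1}(X)-1$ disjoint $(-2)$-curves. Since $p_g = q = 0$, we have $\chi(\mathcal{O}_X) = 1$, $b_1 = 0$ and $b_2 = h^{1,1}(X) = n+1$; Noether's formula therefore gives $K_X^2 = 12 - e(X) = 9 - n$. The sublattice $L = \langle D_1,\dots,D_n\rangle \subset \mathrm{NS}(X)_{\mathbb{Q}}$ is negative definite of rank $n$ (isomorphic to $-2 I_n$), and since $K_X \cdot D_i = 0$, the class $K_X$ lies in the orthogonal complement $L^\perp$. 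The Hodge index theorem makes $L^\perp$ one-dimensional and positive-definite, hence $K_X^2 \geq 0$ and $n \leq 9$.

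\textbf{Enriques and general-type cases.} If $n = 9$, then $K_X \equiv 0$ numerically; no $(-1)$-curve can exist, so $X$ is minimal, and the Enriques--Kodaira classification (together with $p_g = q = 0$) forces $X$ to be an Enriques surface. But the étale K3 double cover $\tilde X \to X$ lifts each $D_i \cong \mathbb{P}^1$ to two disjoint $(-2)$-curves, producing $18$ disjoint $(-2)$-curves on $\tilde X$ --- contradicting Nikulin's upper bound of $16$ on a K3 surface. If $0 \leq n \leq 8$, then $K_X^2 = 9 - n > 0$, ruling out $\kappa(X) \in \{0,1\}$. In the general-type case ($\kappa = 2$), first argue that $X$ is minimal: any $(-1)$-curve $E$ must meet some $D_i$ (else $E, D_1,\dots,D_n$ would span a rank-$(n+1)$ negative-definite sublattice of $\mathrm{NS}(X)$, violating Hodge index), and after contracting $E$ the inductive hypothesis described below forces the contracted surface to be a fake projective plane, which admits no $(-1)$-curve --- contradiction. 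Then contract the $D_i$ to obtain a normal surface $\bar X$ with $n$ disjoint $A_1$ singularities and $K_{\bar X}$ ample, and apply the orbifold Bogomolov--Miyaoka--Yau inequality
\[
K_{\bar X}^2 \;\leq\; 3\,\hat e(\bar X) \;=\; 3\,(3 - n/2) \;=\; 9 - \tfrac{3n}{2},
\]
which, combined with $K_{\bar X}^2 = K_X^2 = 9 - n$, forces $n = 0$. Thus $X$ is a minimal surface of general type with $p_g = q = 0$, $K_X^2 = 9$, and equality in BMY; Yau's uniformization theorem identifies $X$ as a complex hyperbolic ball quotient, i.e.\ a fake projective plane.

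\textbf{Rational case and main obstacle.} If $\kappa(X) = -\infty$, then $X$ is rational (as $q = 0$). Induct on $\rho(X) = n+1$: the cases $\rho = 1, 2$ yield $\mathbb{P}^2$ and, among the Hirzebruch surfaces $\mathbf{F}_m$, only $\mathbf{F}_2$ (the unique one containing a $(-2)$-curve). For $\rho \geq 3$, pick a $(-1)$-curve $E$; by the Hodge-index argument it meets some $D_i$. Contract $E$ to $X'$: the $D_j$ disjoint from $E$ remain $(-2)$-curves, while each $D_i$ with $E\cdot D_i \geq 1$ pushes forward to a curve of self-intersection $-2 + (E\cdot D_i)^2$. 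Verify case by case that $\mu(X') = h^{1,1}(X') - 1$ is preserved (possibly after a secondary contraction, e.g.\ when two $D_i$'s each meet $E$ transversally), so that by induction $X' \in \{\mathbb{P}^2, \mathbf{F}_2\}$ (a fake projective plane is excluded since $X$ is rational). A final contradiction arises from the explicit structure of the Picard lattices of $\mathbb{P}^2$ and $\mathbf{F}_2$: neither surface carries a $(-1)$-curve that could serve as the pushforward of $D_1$. \textbf{The main obstacle} is this case analysis of the intersection pattern $(E\cdot D_1,\dots,E\cdot D_n)$ in the rational induction --- one must handle patterns where $E$ meets multiple $D_i$'s or a single $D_i$ with multiplicity $\geq 2$, via iterated contractions, and rigorously check at each stage that no extraneous $(-2)$-curve appears in $X'$ to break the inductive count $\mu(X') = h^{1,1}(X') - 1$.
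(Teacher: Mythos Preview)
Your proof is incomplete exactly where you flag it: the rational case. The inductive analysis of how a $(-1)$-curve $E$ can meet the $D_i$'s is genuinely delicate, and you have not carried it out; for instance, when $E$ meets two of the $D_i$ transversally, contracting $E$ leaves only $n-2$ disjoint $(-2)$-curves on a surface with $h^{1,1}=n$, and the induction hypothesis no longer applies. The paper sidesteps this entirely. After contracting the $D_i$ to obtain $S$ with $\rho(S)=1$, one has the clean dichotomy $K_S$ nef or $-K_S$ ample. In the anti-ample (rational) case, the number $k$ of nodes on $S$ is at most $5$ by the orbifold BMY bound for $\mathbb{Q}$-homology planes (Corollary~\ref{bound5}); then the sublattice $\langle K_X, D_1,\dots,D_k\rangle$ has finite index in the unimodular lattice $H^2(X,\mathbb{Z})$, so its absolute discriminant $(9-k)\cdot 2^k$ must be a perfect square. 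For $0\le k\le 5$ this forces $k\in\{0,1\}$, yielding $\mathbb{P}^2$ or $\mathbf{F}_2$ at once, with no induction and no intersection-pattern casework.

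Two smaller points. Your Nikulin argument for the Enriques case is correct but unneeded: in the paper the nef-$K_S$ branch is handled uniformly by orbifold BMY, which gives $4q+4p_g+\tfrac{1}{2}h^{1,1}\le\tfrac{1}{2}$, hence $q=p_g=0$ and $h^{1,1}=1$, so no Enriques or $\kappa=1$ surface ever enters the picture. And your minimality argument in the general-type case is both shakier than you indicate (it, too, must handle $E$ meeting several $D_i$) and unnecessary: if $K_X$ were not nef then neither is $K_S$, so $-K_S$ is ample since $\rho(S)=1$, whence $-K_X=-f^*K_S$ is nef and big and $\kappa(X)=-\infty$, contradicting $\kappa(X)=2$.
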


Note that $\rho(X)=h^{1,1}(X)$ for a smooth projective surface $X$
with $p_g(X)=0$. The Hirzebruch surface ${\bf F}_2$ contains one
nodal curve, while ${\bf P}^2$ or a fake projective plane contains
none. The latter two cases were not mentioned in \cite{DLP}, as
the authors focused on the case with $\mu(X)>0$.

A $\mathbb{Q}$-homology projective plane is a normal projective
surface having the same $\mathbb{Q}$-homology groups as ${\bf
P}^2$. If a $\mathbb{Q}$-homology projective plane has rational
singularities only, then both the surface and its resolution have
$p_g=q=0$. Theorem 1.1 also gives the following classification of
$\mathbb{Q}$-homology projective planes with nodes only.

\begin{corollary}\label{maincor} Let $S$ be a $\mathbb{Q}$-homology projective plane.
 Assume that all
singularities of $S$ are nodes. Then $S$ is isomorphic to ${\bf
P}^2$ or a fake projective plane or a cone in ${\bf P}^3$ over a
conic curve.
\end{corollary}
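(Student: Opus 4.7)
The plan is to reduce the corollary to Theorem~\ref{dlp} via the minimal resolution, using the disjoint exceptional curves over the nodes to witness the extremal case.

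Let $\pi\colon X\to S$ be the minimal resolution of the $n$ nodes of $S$. Because nodes are rational singularities, the remark preceding the statement gives $p_g(X)=q(X)=0$, and the exceptional locus of $\pi$ is a disjoint union of $n$ smooth $(-2)$-curves $E_1,\dots,E_n$. Since $b_2(S)=1$ and each node contributes exactly one extra class to $b_2$ upon resolution,
$$h^{1,1}(X)=b_2(X)=1+n.$$
The $E_i$ therefore show $\mu(X)\ge n = h^{1,1}(X)-1$, and combined with the Hodge-index bound $\mu(X)\le h^{1,1}(X)-1$, equality holds.

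Theorem~\ref{dlp} then forces $X$ to be $\mathbf{P}^2$, a fake projective plane, or $\mathbf{F}_2$, and the proof finishes by matching each possibility with $S$ using the description of $(-2)$-curves already recorded after Theorem~\ref{dlp}. If $X=\mathbf{P}^2$ or $X$ is a fake projective plane, then $X$ carries no $(-2)$-curve (equivalently, $\rho(X)=1$ and $K_X$ is (anti-)ample, so adjunction forbids one), hence $n=0$ and $S\cong X$. If $X=\mathbf{F}_2$, the unique $(-2)$-curve is the negative section $C_0$, so $n=1$ and $S$ is obtained by contracting $C_0$; this contraction is classically realized by the linear system $|C_0+2f|$, whose image is the singular quadric in $\mathbf{P}^3$, i.e.\ a cone over a smooth conic.

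I do not expect any genuine obstacle: the sole substantive input is Theorem~\ref{dlp}, and the three case analyses are routine. The only point requiring care is the final identification of $\mathbf{F}_2$ contracted along $C_0$ with the quadric cone in $\mathbf{P}^3$, which is a standard fact about Hirzebruch surfaces.
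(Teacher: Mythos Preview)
Your proposal is correct and follows exactly the route the paper indicates: the paper does not spell out a proof of Corollary~\ref{maincor} but simply remarks that a $\mathbb{Q}$-homology projective plane with rational singularities has resolution with $p_g=q=0$, so Theorem~\ref{dlp} applies to the minimal resolution. You have supplied precisely these details---the count $h^{1,1}(X)=1+n$, the identification $\mu(X)=h^{1,1}(X)-1$, and the case-by-case matching of $X$ with $S$---all of which are standard and correctly argued.
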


In this paper we first show that the condition ``$q(X)=p_g(X)=0$"
in Theorem \ref{dlp} is not necessary.

\begin{theorem}\label{maxnodes} Let $X$ be a smooth projective surface,
not necessarily minimal. Then $\mu(X)=h^{1,1}(X)-1$ if and only if
$X$ is isomorphic to ${\bf F}_2$  or ${\bf P}^2$ or a fake
projective plane.
\end{theorem}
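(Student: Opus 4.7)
The plan is to reduce to Theorem~\ref{dlp} by showing that the hypothesis $\mu(X)=h^{1,1}(X)-1$ already forces $p_g(X)=q(X)=0$. The starting point is Hodge-theoretic. If $C_1,\dots,C_N$ are $N=h^{1,1}(X)-1$ disjoint $(-2)$-curves, the sublattice $L=\langle C_1,\dots,C_N\rangle$ is negative definite of maximal possible rank inside $H^{1,1}(X,\mathbb{R})$ (whose intersection form has signature $(1,h^{1,1}(X)-1)$), so the orthogonal complement $L^\perp$ is one-dimensional and positive definite. Adjunction gives $K_X\cdot C_i=0$, so $K_X\in L^\perp$. This yields three cases: (A) $K_X\equiv 0$; (B) $K_X^2>0$ and $K_X\cdot H>0$ for some ample $H$; (C) $K_X^2>0$ and $K_X\cdot H<0$.

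In case (A), $X$ is minimal of Kodaira dimension zero, so K3, Enriques, abelian, or bi-elliptic. Abelian and bi-elliptic surfaces have no rational curves. A K3 has at most $16$ disjoint $(-2)$-curves (Nikulin), whereas $h^{1,1}-1=19$. For an Enriques $Y$, the étale K3 double cover $X\to Y$ splits each disjoint $(-2)$-curve on $Y$ into two disjoint $(-2)$-curves on $X$, so $\mu(Y)\le 8<9=h^{1,1}(Y)-1$. Hence (A) is vacuous.

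In case (C), $-K_X$ is big and $X$ is uniruled. If $q(X)\ge 1$, then $X$ is a $k$-fold blow-up of a $\mathbf{P}^1$-bundle over a curve of genus $g=q(X)\ge 1$, and $K_X^2=8(1-g)-k\le 0$, contradicting $K_X^2>0$. Hence $q=p_g=0$, and Theorem~\ref{dlp} yields $X\cong \mathbf{F}_2$ or $\mathbf{P}^2$.

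Case (B) is the main one. I would first show $X$ is minimal: given a $(-1)$-curve $E$, decomposing $E=E_L+E_{L^\perp}$ in $\mathrm{NS}(X)_{\mathbb{Q}}$ with $E_{L^\perp}=-K_X/K_X^2$ (from $K_X\cdot E=-1$) and using $E\cdot C_j\ge 0$ to force the coefficients of $E_L=\sum a_jC_j$ to satisfy $a_j\le 0$, one writes $-K_X=K_X^2\bigl(E+\sum(-a_j)C_j\bigr)$ as a positive multiple of an effective divisor, contradicting $K_X\cdot H>0$. Then I would contract the $(-2)$-curves to obtain a normal surface $Y$ with $N$ nodes, on which $K_Y$ is big and nef, and apply the orbifold Bogomolov--Miyaoka--Yau inequality $K_Y^2\le 3 e_{\mathrm{orb}}(Y)$, with $e_{\mathrm{orb}}(Y)=e(X)-\tfrac{3}{2}N$ (each $A_1$ contributing $3/2$). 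Substituting Noether's formula $K_X^2=10-8q+10p_g-h^{1,1}$ reduces BMY to $h^{1,1}+8q+8p_g\le 1$, forcing $h^{1,1}=1$, $q=p_g=0$, $N=0$; then $X$ is a minimal surface of general type with $K_X^2=9$ and $p_g=q=0$, i.e., a fake projective plane. The principal obstacle is setting up and applying the correct orbifold form of BMY for a nodal general-type surface and running the arithmetic; the K3/Enriques bounds in (A) and the minimality argument in (B) are the other key technical inputs.
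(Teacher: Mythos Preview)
Your proof is correct, and it shares its main engine with the paper's: the orbifold Bogomolov--Miyaoka--Yau inequality applied to the nodal contraction is what forces $q=p_g=0$ and $h^{1,1}=1$ in the ``positive $K$'' case, and the ``negative $K$'' case is reduced to the known rational classification. The organization, however, is more circuitous than necessary in two places.

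First, the paper passes immediately to the contraction $f:X\to S$, observes $\rho(S)=1$, and splits into the dichotomy $K_S$ nef versus $-K_S$ ample. In the nef case BMY is applied directly and yields $4q+4p_g+\tfrac{1}{2}h^{1,1}\le \tfrac{1}{2}$, which already excludes all $\kappa=0$ surfaces; so your separate treatment of case~(A) via Nikulin's $16$-node bound for K3's and the Enriques double-cover trick, while correct, is not needed. Second, your minimality argument in case~(B) is also dispensable: since $\rho(X)=h^{1,1}(X)$ (forced by the rank of $L$), the contraction has $\rho=1$, so $K$ on it is automatically ample or anti-ample, and nefness comes for free; BMY then gives $h^{1,1}=1$, from which minimality falls out a posteriori. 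Your case~(C), using $K_X^2=8(1-q)-k\le 0$ for irrational ruled surfaces, is a clean numerical alternative to the paper's argument that nodal curves must lie in fibres of the Albanese map. In sum: same key idea, but the paper's single dichotomy on $K_S$ is more economical than your trichotomy on $K_X$.
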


Next, we describe smooth projective complex surfaces $X$ with
$\mu(X)=h^{1,1}(X)-2$.

\begin{theorem}\label{nearmaxnodes} Let $X$ be a smooth projective surface,
not necessarily minimal. Assume that $\mu(X)=h^{1,1}(X)-2$. Then
$X$ belongs to one of the following cases$:$
\begin{enumerate}
\item nef $K_X:$
\begin{enumerate}
\item  a bi-elliptic surface, i.e. a minimal surface of Kodaira
dimension $0$ with $q=1$, $p_g=0$, $h^{1,1}=2;$ \item  a minimal
surface of Kodaira dimension $1$ with $q=1$, $p_g=0$, $h^{1,1}=2;$
\item an Enriques surface with $8$ disjoint nodal curves $;$ \item
a minimal surface of Kodaira dimension $1$ with $q=p_g=0$ whose
elliptic fibration has $2$ reducible fibres of type $I_0^*$ whose
end components give the $8$ disjoint nodal curves $;$ \item a ball
quotient with $q=0$, $p_g=1$, i.e. a minimal surface of general
type with $q=0$, $p_g=1$, $h^{1,1}=2;$ \item a minimal surface of
general type with $q=p_g=0$, $K^2=1,2,4,6,7,8$ containing $8-K^2$
disjoint nodal curves $;$
\end{enumerate}
\item non-nef $K_X:$
\begin{enumerate}
\item  the blowup of a fake projective plane at one point or at
two infinitely near points $;$ \item a relatively minimal
irrational ruled surface; or its blow-up at two infinitely near
points on each of $k\ge 1$ fibres so that each of the $k$ fibres
becomes a string of $3$ rational curves
$(-2)\textrm{---}(-1)\textrm{---}(-2);$ \item a rational ruled
surface ${\bf F}_e$, $e\neq 2;$ \item the blowup of ${\bf F}_e$ at
two infinitely near points on each of $k\ge 1$ fibres so that each
of the $k$ fibres becomes a string of $3$ rational curves
$(-2)\textrm{---}(-1)\textrm{---}(-2);$ \item the blowup of ${\bf
F}_2$ at two infinitely near points away from the negative section
so that one fibre becomes a string of $3$ rational curves
$(-1)\textrm{---}(-2)\textrm{---}(-1);$ \item the blowup of ${\bf
F}_2$ at one point away from the negative section; or equivalently
the blowup of ${\bf F}_1$ at one point on the negative section.
\end{enumerate}
\end{enumerate}
\end{theorem}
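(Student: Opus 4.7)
My plan is to split the argument according to whether $K_X$ is nef. In the nef case, $X$ is automatically minimal, and I would work Kodaira-dimension by Kodaira-dimension, using the Noether-formula identity
\[
h^{1,1}(X)=10-8q(X)+10\,p_g(X)-K_X^{2}
\]
to translate the hypothesis $\mu(X)=h^{1,1}(X)-2$ into numerical constraints on the birational invariants. In the non-nef case, I would induct downward: pick a $(-1)$-curve $E\subset X$, contract it by $\pi\colon X\to X'$, and track how $\mu$ and $h^{1,1}$ change, eventually reducing either to the nef case handled above or to the extremal situation governed by Theorem \ref{maxnodes}.

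\textbf{The nef case.} Kodaira dimension $-\infty$ is impossible, since no minimal surface of negative Kodaira dimension carries a nef canonical class. For Kodaira dimension $0$ the Enriques classification produces four families: abelian surfaces are excluded because they carry no rational curves at all, K3 surfaces because $h^{1,1}=20$ cannot accommodate $18$ disjoint $(-2)$-curves (Hodge index together with the bound on root sublattices of the K3 lattice), which leaves bi-elliptic surfaces (case (a)(i), where $\mu=0=h^{1,1}-2$ is automatic) and Enriques surfaces (case (a)(iii), requiring the $8$ disjoint nodal curves). For Kodaira dimension $1$ the canonical elliptic fibration together with Kodaira's list of singular fibres pins down $q$, $p_g$, $h^{1,1}$, and the reducible-fibre types; analysing which fibre components can contribute disjoint $(-2)$-curves produces (a)(ii) and (a)(iv). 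For Kodaira dimension $2$, I would apply the Bogomolov--Miyaoka--Yau inequality $K_X^{2}\le 9\chi(\mathcal O_X)$ together with its logarithmic refinement (Miyaoka--Sakai), applied to the disjoint union of the $\mu(X)$ nodal curves; this bounds $K_X^{2}$ from above and limits the admissible $(q,p_g)$, yielding the ball-quotient case (a)(v) at BMY-equality and the list in (a)(vi) otherwise.

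\textbf{The non-nef case.} Let $E$ be a $(-1)$-curve on $X$ and $\pi\colon X\to X'$ its contraction, so $h^{1,1}(X)=h^{1,1}(X')+1$. A disjoint nodal curve $C\subset X$ either avoids $E$ (and descends to a $(-2)$-curve on $X'$), meets $E$ transversally in exactly one point (and descends to a $(-1)$-curve), or meets $E$ in two points or at one tangent point (producing a singular rational curve downstairs). This trichotomy controls how $\mu(X')$ relates to $\mu(X)$, and iterating the contraction down to a minimal model $X_{\min}$ leaves either $\mu(X_{\min})=h^{1,1}(X_{\min})-1$ or $\mu(X_{\min})=h^{1,1}(X_{\min})-2$. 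The first possibility triggers Theorem \ref{maxnodes}, forcing $X_{\min}\in\{\mathbf F_2,\mathbf P^2,\text{a fake projective plane}\}$, and reconstructing which blowups are compatible with increasing $\mu$ by the right amount yields cases (b)(i) and (b)(iii)--(b)(vi); the second possibility, combined with the nef case, produces the irrational-ruled family (b)(ii).

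\textbf{Main obstacle.} I expect the hardest step to be (a)(vi): showing that the admissible values are exactly $K_X^{2}\in\{1,2,4,6,7,8\}$, with precisely $8-K_X^{2}$ disjoint nodal curves, and ruling out $K_X^{2}=3,5$. This requires the sharp form of the logarithmic BMY inequality together with a careful count of the orbifold Euler-number contribution of the $A_1$-configuration, plus invoking known constructions to realise each surviving value. The combinatorial bookkeeping in the non-nef case---verifying that the only way to create the missing disjoint nodal curve via infinitely-near blowups is to form a string $(-2)$---$(-1)$---$(-2)$ (or, on $\mathbf F_2$ away from the negative section, $(-1)$---$(-2)$---$(-1)$) on a single fibre---is also subtle, but more mechanical than the log-BMY analysis.
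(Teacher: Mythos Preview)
Your overall two-case split (nef vs.\ non-nef) matches the paper, and the non-nef part is close in spirit to what the paper does via Lemma~\ref{lp}. But there are two genuine problems in your nef-case plan.

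\medskip
\textbf{The numerical reduction.} Going Kodaira-dimension by Kodaira-dimension is more fragile than you suggest. For $\kappa=1$ you assert that the elliptic fibration ``pins down $q,p_g,h^{1,1}$'', but it does not: there are properly elliptic surfaces with arbitrarily large $p_g$ and $h^{1,1}$, and nothing in Kodaira's fibre list alone restricts the invariants. Likewise your K3 exclusion via ``Hodge index plus root-sublattice bounds'' is not immediate (Hodge index only gives $\mu\le 19$). The paper bypasses all of this in one stroke: contract the $h^{1,1}-2$ nodal curves to get $f\colon X\to S$, note $K_S$ is nef with $K_S^2=K_X^2$, and apply the \emph{orbifold} Bogomolov--Miyaoka--Yau inequality $K_S^2\le 3e_{orb}(S)$ directly to the nodal surface $S$. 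With $e_{orb}(S)=e(S)-\tfrac{1}{2}(h^{1,1}-2)$ and Noether's formula this collapses to
\[
4q(X)+4p_g(X)+\tfrac{1}{2}h^{1,1}(X)\le 5,
\]
which instantly yields the three admissible triples $(q,p_g,h^{1,1})$ and rules out K3, abelian, and all higher-$p_g$ elliptic or general-type surfaces simultaneously.

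\medskip
\textbf{Excluding $K_X^2=3,5$.} This is where your plan actually fails. Orbifold BMY cannot rule these out: in the $q=p_g=0$ regime one computes $e_{orb}(S)=4-\tfrac{1}{2}(8-K^2)=K^2/2$, so the inequality $K^2\le 3e_{orb}(S)$ reads $K^2\le\tfrac{3}{2}K^2$, which is vacuous for $K^2\ge 0$. No sharpening of the Euler-number count will help; the obstruction is arithmetic, not metric. The paper's argument is lattice-theoretic: let $L=H^2(X,\mathbb Z)/\text{tors}$ (odd unimodular of rank $h^{1,1}$) and $M\subset L$ the span of the nodal classes $C_1,\dots,C_{\mu}$, and study the reduction map $\tau\colon M/2M\to L/2L$. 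The source is totally isotropic, so $\ker\tau$ must have length at least $\mu-\lfloor(\mu+2)/2\rfloor$. Any nonzero element of $\ker\tau$ comes from $\sum C_{i_j}\in 2L$ up to torsion, and since such a half has $K$-degree $0$ its square is even, forcing the number of summands to be divisible by $4$. For $K^2=3$ one has $\mu=5$, so every nontrivial kernel element would be a sum of exactly four of the five $C_i$'s, which cannot produce a subgroup of length $\ge 2$; for $K^2=5$, $\mu=3$ and no subset works at all. This mod-$2$ divisibility argument is the missing idea; you should replace the log-BMY step in (a)(vi) with it.
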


We remark that all cases of Theorem \ref{nearmaxnodes} are
supported by an example except the case (1-f) with $K^2=1$, or
$7$.

For the case (1-b), such surfaces can be obtained
 by taking a quotient $(E\times C)/G$ of the
product of an elliptic curve $E$ and a hyperelliptic curve $C$ of
genus $g(C)\ge 2$ by a group $G$ of order 2 acting on $E$ as a
translation by a point of order 2 and on $C$ as the hyperelliptic
involution. (If $g(C)=1$ we get a bi-elliptic surface.)

For (1-c), such Enriques surfaces were completely classified in
\cite{LP}. See also \cite{Kon} and \cite{Keum} for explicit
examples.

For (1-d), the Jacobian fibration of such a surface is a rational
elliptic surface $Y$ with two singular fibres of type $I_0^*$.
(The Jacobian fibration of an elliptic fibration has singular
fibres of the same type as the original fibration (cf.
\cite{CD}).) In other words, such surfaces are torsors of $Y$,
i.e., can be obtained by performing logarithmic transformations on
$Y$. If the orders of logarithmic transformations are $(2,2)$,
then the resulting surface is an Enriques surface belonging to the
case (1-c). Such a rational elliptic surface $Y$ can be
constructed in many ways, e.g., by blowing up the base points of a
specific cubic pencil on ${\bf P}^2$ (\cite{CD}) or by taking a
minimal resolution of a $\mathbb{Z}/2$-quotient of the product of
an elliptic curve $E$ and ${\bf P}^1$ where the group acts on $E$
as the inversion and on ${\bf P}^1$ as an involution. It is easy
to see that any such rational elliptic surface $Y$ is a special
case of (2-d). (Consider a free pencil $|N_1+2C+N_2|$ on $Y$ where
$C$ is a section meeting two simple components $N_1$ and $N_2$ of
the two reducible fibres.)

For the case (1-f) with $K^2=2,4,6$, examples can be found in
\cite{BCGP}. See Remark 4.2.

\bigskip
{\bf Notation}

${\bf F}_e:=$Proj$(\mathcal{O}_{{\bf P}^1}\oplus \mathcal{O}_{{\bf
P}^1}(e))$, $e\ge 0$, a rational ruled surface (Hirzebruch
surface)

${\bf P}^n$: the complex projective $n$-space

$\rho(Y)$: the Picard number of a variety $Y$

$K_Y$: the canonical class of $Y$

$b_i(Y)$: the $i$-th Betti number of $Y$

$e(Y)$: the topological Euler number of $Y$

$c_i(X)$: the $i$-th Chern class of $X$

$e_{orb}(S)$: the orbifold Euler number of a surface $S$ with
quotient singularities only

$h^{i,j}(X)$: the $(i,j)$-th Hodge number of a smooth variety $X$

$q(X):=\dim H^1(X,\mathcal{O}_{X})$ the irregularity of a smooth
surface $X$

$p_g(X):=\dim H^2(X,\mathcal{O}_{X})$ the geometric genus of a
smooth surface $X$

$|G|$: the order of a finite group $G$

$(-m)$-curve: a smooth rational curve on a surface with
self-intersection $-m$

$\mu(Z)$: the maximum of the cardinality of a set of disjoint
$(-2)$-curves on a smooth surface $Z$

\section{The orbifold Bogomolov-Miyaoka-Yau
inequality}

 A singularity $p$ of a normal surface $S$ is called a
quotient singularity if the germ at $p$ is analytically isomorphic
to the germ of $\mathbb{C}^2/G_p$ at the image of the origin $O\in
\mathbb{C}^2$ for some nontrivial finite subgroup $G_p$ of
$GL_2(\mathbb{C})$ not containing quasi-reflections. Brieskorn
classified all such finite subgroups of $GL(2, \mathbb{C})$ in
\cite{Bries}. We call $G_p$ the local fundamental group of the
singularity $p$.

 Let $S$ be a normal projective surface with quotient singularities and $$f : S'
\rightarrow S$$ be a minimal resolution of $S$. For each quotient
singular point $p\in S$, there is a string of smooth rational
curves $E_j$ such that $$f^{-1}(p)=\cup_{j=1}^{l} E_j.$$ It is
well-known that quotient singularities are log-terminal
 singularities. Thus one can compare canonical classes $K_{S'}$ and $K_S$, and write $$K_{S'} \underset{num}{\equiv} f^{*}K_S -
 \sum{\mathcal{D}_p}$$ where $\mathcal{D}_p = \sum_{j=1}^{l}(a_jE_j)$ is an effective $\mathbb{Q}$-divisor supported on $f^{-1}(p)=\cup_{j=1}^{l} E_j$ and $0 \leq a_j < 1$.
It implies that
\[K^2_S = K^2_{S'} - \sum_{p \in Sing(S)}{\mathcal{D}_p^2}.
\]
The coefficients $a_1, \ldots, a_l$ of $\mathcal{D}_p$ are
uniquely determined by the system of linear equations
$$\mathcal{D}_p\cdot E_i =-K_{S'}\cdot E_i = 2+E_i^2\quad (1\le i\le l).$$  In particular, $\mathcal{D}_p=0$ if and only if $p$
is a rational double point.

Also we  recall the orbifold Euler characteristic
$$ e_{orb}(S) := e(S) - \sum_{p \in Sing(S)} \Big ( 1-\frac{1}{|G_p|} \Big )$$
where $e(S)$ is the topological Euler number of $S$, and $|G_p|$
the order of the local fundamental group $G_p$ of $p$.

The following theorem is called the orbifold Bogomolov-Miyaoka-Yau
inequality.

\begin{theorem}[\cite{Sakai}, \cite{Miyaoka}, \cite{KNS}, \cite{Megyesi}]\label{bmy} Let
$S$ be a normal projective surface with quotient singularities
such that $K_S$ is nef. Then
\[
K_{S}^2 \leq 3e_{orb}(S).
\]
In particular,
\[
0 \leq e_{orb}(S).
\]
\end{theorem}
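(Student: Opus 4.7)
The plan is to reduce the orbifold statement to Miyaoka's log version of the Bogomolov-Miyaoka-Yau inequality, applied to the minimal resolution $f\colon S'\to S$. Using the discrepancy identity $K_{S'}\underset{num}{\equiv} f^{*}K_S-\sum_p \mathcal{D}_p$ recalled above, we rewrite this as $f^{*}K_S\underset{num}{\equiv} K_{S'}+\sum_p \mathcal{D}_p$, so that $(S',\sum_p \mathcal{D}_p)$ is a log canonical pair whose log canonical divisor pulls back from $K_S$. Because $K_S$ is nef on $S$, its pullback $K_{S'}+\sum_p \mathcal{D}_p$ is nef on $S'$, which is the hypothesis needed to apply Miyaoka's log inequality
\[
(K_{S'}+B)^{2}\le 3\,\overline{c}_{2}(S',B)
\]
with $B=\sum_p \mathcal{D}_p$, where $\overline{c}_{2}(S',B)$ is the appropriate log second Chern number.

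Next I would match the two sides of this inequality with the quantities in the statement. For the left-hand side, $(f^{*}K_S)^{2}=K_S^{2}$, which is the identity $K_S^{2}=K_{S'}^{2}-\sum_p \mathcal{D}_p^{\,2}$ already recorded in the text. For the right-hand side the task is local at each $p\in \mathrm{Sing}(S)$: one must show that the contribution of the exceptional string $\bigcup_j E_j$, weighted by the discrepancies $a_j$, subtracts exactly $1-1/|G_p|$ from the topological Euler number of $S'$, giving $\overline{c}_{2}(S',B)=e_{orb}(S)$. Since the minimal resolution of a quotient singularity $\mathbb{C}^{2}/G_p$ is a Hirzebruch-Jung string determined by Brieskorn's classification \cite{Bries}, this local matching is a finite explicit computation on each string, and it can be performed component by component using the defining linear system $\mathcal{D}_p\cdot E_i=2+E_i^{2}$.

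The main obstacle, and the reason the theorem properly belongs to the cited authors rather than being a short formal consequence, is the log inequality itself. Its proof requires either a semistability argument for an appropriate logarithmic cotangent sheaf on $S'\setminus \bigcup_p f^{-1}(p)$, in the spirit of Miyaoka \cite{Miyaoka}, or an orbifold Kähler-Einstein metric on $S$, in the spirit of Kobayashi-Nakamura-Sakai \cite{KNS}, extended by Megyesi \cite{Megyesi} to cover all log canonical boundaries arising from quotient singularities; either route is substantial. Once this input is available, the reduction above is essentially bookkeeping, and the final positivity assertion $0\le e_{orb}(S)$ follows automatically: $K_S$ nef implies $K_S^{2}\ge 0$ on a projective surface, so $3\,e_{orb}(S)\ge K_S^{2}\ge 0$.
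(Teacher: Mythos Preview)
The paper does not give its own proof of this theorem: it is stated with attributions to \cite{Sakai}, \cite{Miyaoka}, \cite{KNS}, \cite{Megyesi} and then used as a black box in Sections~3 and~4. So there is no argument in the paper to compare your proposal against; the author is simply quoting a known result.

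That said, your outline is a fair high-level summary of how the cited proofs proceed, and you correctly identify that the substantive content lies in establishing the log (or orbifold) inequality itself, not in the bookkeeping on the resolution. Two small points are worth tightening. First, you write that the exceptional set over each $p$ is ``a Hirzebruch--Jung string''; this is only true for cyclic quotient singularities, whereas Brieskorn's classification also includes the non-cyclic types (binary dihedral, tetrahedral, octahedral, icosahedral), whose exceptional divisors are trees with a trivalent vertex rather than chains. The local computation still goes through, but it is not literally a string. Second, since your boundary $B=\sum_p\mathcal{D}_p$ has fractional coefficients $0\le a_j<1$, Miyaoka's original log inequality for reduced normal crossing boundaries does not apply directly; one genuinely needs the generalisations in \cite{KNS} or \cite{Megyesi}, as you note. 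With those caveats your sketch is sound, and your deduction of $e_{orb}(S)\ge 0$ from $K_S^2\ge 0$ is exactly the intended one.
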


The second (weaker) inequality holds true even if $-K_S$ is nef.

\begin{theorem}[\cite{KM}]\label{bmy2} Let
$S$ be a normal projective surface with quotient singularities
such that $-K_S$ is nef. Then
\[
0\leq e_{orb}(S).
\]
\end{theorem}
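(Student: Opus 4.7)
The plan is to run a $K_S$-minimal model program on $S$ itself, using that quotient singularities are log-terminal, and to argue by induction on the Picard number $\rho(S)$. If $K_S$ is already nef, then combined with the hypothesis that $-K_S$ is nef we get $K_S\equiv 0$, so Theorem~\ref{bmy} immediately gives $e_{orb}(S)\geq 0$. Thus assume $K_S$ is not nef. The cone theorem for log-terminal surfaces then produces a $K_S$-negative extremal ray whose contraction $\varphi:S\to T$ is of one of three types: (i) a divisorial contraction of a single irreducible curve $C$ to a point of a normal projective surface $T$ with quotient singularities and $\rho(T)=\rho(S)-1$; (ii) a Mori (conic) fibration over a smooth curve $B$; or (iii) a contraction to a point, in which case $\rho(S)=1$ and $-K_S$ is ample, i.e.\ $S$ is a log del Pezzo surface of Picard rank one.

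In case (i), push-forward shows $-K_T$ remains nef, and I would compare Euler numbers and local fundamental groups before and after contracting $C$ to obtain $e_{orb}(S)\geq e_{orb}(T)$; induction on $\rho(S)$ then reduces the problem to cases (ii) and (iii). In case (ii), the Euler number splits as $2\,e(B)$ from the generic ${\bf P}^1$-fibres plus non-negative contributions from the singular fibres, which absorb any quotient singularities of $S$; using the nefness of $-K_S$ to rule out $g(B)\geq 2$, a term-by-term tally of the local contributions then gives $e_{orb}(S)\geq 0$.

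The hard part will be case (iii), a log del Pezzo surface of Picard rank one with quotient singularities: the MMP terminates so no inductive step is available, and Theorem~\ref{bmy} does not apply because $K_S$ is now anti-ample. One must instead appeal to the structure theory of rank-one log del Pezzo surfaces, roughly showing that the smooth locus is appropriately uniformized (for instance, covered by a Fano variety with mild singularities, or by a rationally connected variety) so that its orbifold Euler number is manifestly non-negative. This is the technical core of the Keel--McKernan theorem, and in my plan it is the main obstacle; the earlier MMP reductions only propagate the conclusion from this base case.
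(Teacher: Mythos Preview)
The paper does not prove Theorem~\ref{bmy2}: it is stated as a citation from Keel--McKernan \cite{KM} and invoked only as a black box (through Corollary~\ref{bound5}) in the alternative argument at the end of Section~3. There is therefore no proof in the paper against which to compare your proposal.

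Regarding the proposal on its own terms: it is a reasonable sketch of the MMP reduction, but as you yourself say, case~(iii) --- the rank-one log del Pezzo surface --- is left open and deferred to ``the technical core of the Keel--McKernan theorem.'' That is not a gap you have overlooked; it is a gap you have announced. The trouble is that this base case \emph{is} essentially the theorem: once the MMP terminates on a rank-one log del Pezzo, establishing $e_{orb}\ge 0$ there requires the substantive content of \cite{KM} (rational connectedness of the smooth locus, or equivalently finiteness of the local orbifold fundamental group), so your outline is a reduction to the result being cited rather than an independent proof. The intermediate steps also hide real work. In case~(i), the claimed inequality $e_{orb}(S)\ge e_{orb}(T)$ unpacks to a numerical statement comparing the order of the local group at the image point with the orders at the singular points of $S$ lying on the contracted curve $C$; this is not automatic and needs the classification of extremal divisorial contractions on klt surfaces (or at least an adjunction/different computation along $C$). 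In case~(ii), the fibrewise count is correct in spirit, but one must actually enumerate the possible singular fibres of a conic bundle with quotient singularities and verify each contributes non-negatively.
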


The following corollary is well-known (e.g. \cite{HK1}, Corollary
3.4) and immediately follows from Theorems \ref{bmy} and
\ref{bmy2}.

\begin{corollary}\label{bound5}
A $\mathbb{Q}$-homology projective plane with quotient
singularities only has at most $5$ singular points.
\end{corollary}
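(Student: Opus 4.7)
The plan is to combine the $\mathbb{Q}$-homology hypothesis, which forces $e(S) = 3$ and $\rho(S) = 1$, with the orbifold BMY inequality. Since $\rho(S) = 1$ and $S$ is projective, $K_S$ is numerically proportional to an ample class, so either $K_S$ or $-K_S$ is nef. Applying Theorem \ref{bmy} in the first case and Theorem \ref{bmy2} in the second, I obtain
\[
e_{orb}(S) = 3 - \sum_{p \in \mathrm{Sing}(S)}\left(1 - \frac{1}{|G_p|}\right) \ge 0.
\]
Every quotient singularity has $|G_p| \ge 2$, so every summand is $\ge 1/2$. This immediately gives $|\mathrm{Sing}(S)| \le 6$.

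The main obstacle is to exclude the borderline case $|\mathrm{Sing}(S)| = 6$. If that occurs, each $|G_p|$ must equal $2$, every singularity is a node, and $e_{orb}(S) = 0$. Let $f \colon X \to S$ be the minimal resolution. Since nodes are rational double points I have $K_X = f^{*}K_S$; since quotient singularities are rational I have $\chi(\mathcal{O}_X) = \chi(\mathcal{O}_S) = 1$ and $e(X) = e(S) + 6 = 9$. Noether's formula then forces $K_S^2 = K_X^2 = 12 - 9 = 3$.

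I would finish by splitting on the sign of $K_S$. If $K_S$ is nef, the strong form of Theorem \ref{bmy} gives $K_S^2 \le 3\, e_{orb}(S) = 0$, contradicting $K_S^2 = 3$. Otherwise $-K_S$ is ample, so $-K_X = -f^{*}K_S$ is nef and big and $X$ is a weak del Pezzo surface of degree $K_X^2 = 3$; the $(-2)$-classes in its Picard lattice form a subsystem of the $E_6$ root system, and six pairwise disjoint $(-2)$-curves would realise an $A_1^6$-subsystem of $E_6$. This is impossible, since the maximum rank of a mutually orthogonal set of roots in $E_6$ is $4$ — equivalently, a nodal cubic surface in $\bbP^3$ has at most four nodes, as realised by Cayley's cubic. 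Either subcase yields a contradiction, so $|\mathrm{Sing}(S)| \le 5$. The routine part will be the BMY bound $n \le 6$; the delicate step is the exclusion of $n = 6$, which requires combining Noether's formula with BMY in one subcase and the weak-del-Pezzo/root-system structure in the other.
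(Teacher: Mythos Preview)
Your proof is correct and follows exactly the route the paper indicates: the paper does not spell out a proof of Corollary~\ref{bound5} but simply records that the bound is well known (citing \cite{HK1}) and follows from the two orbifold BMY inequalities, Theorems~\ref{bmy} and~\ref{bmy2}. Your explicit exclusion of the borderline case $n=6$ is the genuine content here and is carried out correctly; in the Fano subcase the $E_6$/Cayley-cubic argument you use is equivalent to (and can be replaced by) the lattice-determinant trick the paper employs later in Section~3, namely that $K_X$ together with six disjoint $(-2)$-curves would span a full-rank sublattice of absolute determinant $3\cdot 2^{6}=192$ inside the unimodular lattice $H^2(X,\mathbb{Z})$, which is not a perfect square.
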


\section{Proof of Theorem 1.3}

The ``if''-part is trivial.

Let $X$ be a smooth projective surface, not necessarily minimal,
with $h^{1,1}(X)-1$ disjoint nodal curves. We shall show that $X$
is isomorphic to ${\bf F}_2$ or ${\bf P}^2$ or a fake projective
plane. Let
$$f: X\to S$$ be the contraction morphism of the $h^{1,1}(X)-1$
disjoint nodal curves.

Note first that $\rho(S)=1$. Thus $K_S$ is nef or anti-ample.

\medskip
Assume that  $K_S$ is nef.\\
Then we can apply the orbifold Bogomolov-Miyaoka-Yau inequality
(Theorem \ref{bmy}). Note that $K_S^2=K_X^2$. From Noether formula
$$K_X^2=12\{1-q(X)+p_g(X)\}-\{2-4q(X)+h^{1,1}(X)+2p_g(X)\}.$$
Also we have
$$e_{orb}(S)=e(S)-\frac{h^{1,1}(X)-1}{2}.$$
Since $e(S)=e(X)-(h^{1,1}(X)-1)=3-4q(X)+2p_g(X)$, Theorem
\ref{bmy} implies that
$$12(1-q+p_g)-(2-4q+h^{1,1}+2p_g)\le 3\Big(3-4q+2p_g-\frac{h^{1,1}-1}{2}\Big),$$
i.e., $$4q+4p_g+\frac{h^{1,1}}{2}\le \frac{1}{2},$$ hence
$$q(X)=p_g(X)=0, \quad h^{1,1}(X)=1.$$
In particular,  $b_2(X)= 1$, $e(X)=3$, $K_X^2=9$.  Note that
$K_X=f^*K_S$ is nef, hence $X$ is not rational. Thus, by
classification theory of complex surfaces (see \cite {BHPV}), $X$
must be a fake projective plane, i.e. a smooth surface of general
type with $q=p_g=0$, $K^2=9$.

\medskip
Assume that  $-K_S$ is ample.\\ Then  $-K_X=-f^*K_S$ is nef and
non-zero, hence $X$ has Kodaira dimension $\kappa(X)=-\infty$.
Suppose $X$ is not rational. Then there is a morphism $g:X\to C$
onto a curve of genus $\ge 1$, with general fibres isomorphic to
${\bf P}^1$. Since a curve of genus $\ge 1$ cannot be covered by a
rational curve, we see that all nodal curves of $X$ are contained
in a union of fibres. This implies that $S$ has Picard number $\ge
2$, a contradiction. Thus $X$ is rational. Now by Theorem 3.3 of
\cite{DLP},
 $X\cong {\bf F}_2$ or ${\bf P}^2$.\\ Here we give an alternative
 proof. Since we assume that $X$ is rational, $S$ is a $\mathbb{Q}$-homology projective
 plane with nodes only. Let $k$ be the number of nodes on $S$.
 Then $k\le 5$ by Corollary \ref{bound5}.
 Note that $b_2(X)=1+k$, so $K_X^2=9-k$. Let $L$ be the sublattice
 of the cohomology lattice of $X$ generated by the class of $K_X$ and the classes of the $k$ nodal
 curves. Then $L$ is of finite index in the cohomology lattice that is unimodular, hence $|\det(L)|$ is a square integer.
 Note that $|\det(L)|=(9-k)2^k$. If $k\le 5$, then it is a square integer only if $k=0$ or 1.
 If $k=0$, then $X\cong {\bf P}^2$. If $k=1$, then $K_X^2=8$ and $\rho(X)=2$, hence $X\cong {\bf
 F}_2$.\\
This completes the proof of Theorem 1.2.

\begin{remark} Proposition 4.1 of
\cite{DLP} was also proved by using the orbifold
Bogomolov-Miyaoka-Yau inequality. Our proof is just a slight
refinement of their argument.
\end{remark}

\section{Proof of Theorem \ref{nearmaxnodes}}

For a smooth surface $Z$, we denote by $\mu(Z)$ the maximum of the
cardinality of a set of disjoint $(-2)$-curves of $Z$. The
following useful lemma is due to M. Mendes Lopes and R. Pardini.

\begin{lemma}\label{lp} Let X be a smooth surface with Kodaira dimension $\kappa(X)\ge 0$. Let $\phi:X\to Y$ be the map to
the minimal model, and let $r:=\rho(X)-\rho(Y)$. Then
$$\mu(X)\le\mu(Y)+\frac{r}{2}.$$
\end{lemma}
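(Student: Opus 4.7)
The plan is to factor $\phi\colon X\to Y$ as a sequence of $r$ blow-ups at (possibly infinitely near) smooth points, writing $\widetilde{E}_1,\ldots,\widetilde{E}_r$ for the strict transforms on $X$ of the resulting exceptional divisors; these are the irreducible components of the exceptional locus of $\phi$. Choose a maximal collection of $\mu(X)$ pairwise disjoint $(-2)$-curves on $X$ and partition it into $\mathcal{N}_s$, the members not contracted by $\phi$, and $\mathcal{N}_e$, those equal to some $\widetilde{E}_j$. I will prove the bounds $|\mathcal{N}_s|\le\mu(Y)$ and $|\mathcal{N}_e|\le r/2$ separately.

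For the strict part, write $K_X=\phi^{*}K_Y+\Delta$ with $\Delta$ an effective integral divisor supported on the exceptional locus and every component appearing with coefficient at least $1$. For $N\in\mathcal{N}_s$, $K_X\cdot N=0$ because $N$ is a $(-2)$-curve, while the projection formula together with nefness of $K_Y$ (valid because $Y$ is minimal with $\kappa(Y)\ge 0$) gives $\phi^{*}K_Y\cdot N=K_Y\cdot\phi_{*}N\ge 0$. Hence $\Delta\cdot N\le 0$; since $N$ is not a component of $\Delta$, each $\widetilde{E}_j\cdot N$ is non-negative, so this non-positive combination must vanish termwise. Thus $N$ is disjoint from the exceptional locus, $\phi$ is an isomorphism near $N$, and $\phi_{*}N$ is a $(-2)$-curve on $Y$. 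The images $\phi_{*}N$ are pairwise disjoint, giving $|\mathcal{N}_s|\le\mu(Y)$.

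For the exceptional part, say $\mathcal{N}_e=\{N_1,\ldots,N_b\}$. The key input is that the lattice $L\subset\mathrm{NS}(X)$ spanned by $\widetilde{E}_1,\ldots,\widetilde{E}_r$ is unimodular with $\det L=(-1)^{r}$: it admits the orthogonal $\mathbb{Z}$-basis of total transforms $E^{(j)}$ of the successive exceptional divisors, each satisfying $(E^{(j)})^{2}=-1$. Reorder the generators of $L$ as $(N_1,\ldots,N_b,F_1,\ldots,F_{r-b})$ where $F_1,\ldots,F_{r-b}$ are the remaining $\widetilde{E}_j$; the Gram matrix is
$$M=\begin{pmatrix}-2\,I_b & B\\[2pt] B^{T} & D\end{pmatrix},$$
with integer entries. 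The Schur complement formula yields
$$\det M=(-2)^{b}\det\!\bigl(D+\tfrac{1}{2}B^{T}B\bigr)=(-1)^{r},$$
so $\det\!\bigl(D+\tfrac{1}{2}B^{T}B\bigr)=\pm 2^{-b}$. Multiplying this $(r-b)\times(r-b)$ matrix by $2$ produces the integer matrix $2D+B^{T}B$, whose determinant equals $2^{r-b}\cdot(\pm 2^{-b})=\pm 2^{r-2b}$. Integrality of this number forces $r-2b\ge 0$, i.e., $b\le r/2$.

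Combining the two bounds gives $\mu(X)=|\mathcal{N}_s|+|\mathcal{N}_e|\le\mu(Y)+r/2$. The main obstacle is the third paragraph: once one recognizes that the exceptional part of $\mathrm{NS}(X)$ is a unimodular lattice of determinant $(-1)^{r}$, the bound $b\le r/2$ drops out of a short Schur-complement integrality check. The strict-part argument is essentially dictated by nefness of $K_Y$ on the minimal model together with the negative-definite structure of the exceptional divisor.
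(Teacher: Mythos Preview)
Your proof is correct, and it takes a genuinely different route from the paper's argument for the exceptional part.

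The paper proves the lemma by induction on $r$: one contracts a single $(-1)$-curve $E_1$ and shows that either all of the $C_i$ descend to $(-2)$-curves on the blow-down (so $\mu$ does not drop while $r$ drops by $1$), or exactly one $C_i$ meets $E_1$, in which case two successive contractions remove one $(-2)$-curve and decrease $r$ by $2$. The hypothesis $\kappa(X)\ge 0$ is used to rule out two of the $C_i$ meeting the same $(-1)$-curve, since otherwise their images would be intersecting $(-1)$-curves on a surface of non-negative Kodaira dimension. Your treatment of $\mathcal{N}_s$ coincides with the paper's preliminary observation that a non-exceptional $(-2)$-curve is disjoint from the support of $E$; the divergence is entirely in how the exceptional $(-2)$-curves are bounded.

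Your bound $|\mathcal{N}_e|\le r/2$ via the Schur complement is a clean replacement for the induction. It isolates the underlying lattice-theoretic reason for the factor $\tfrac{1}{2}$: the exceptional lattice $L$ is odd unimodular of rank $r$, and embedding $b$ mutually orthogonal $(-2)$-vectors into it forces $2^{\,r-2b}$ to be an integer. This avoids the step-by-step geometry and does not require the careful check that at most one $C_i$ can touch a given $(-1)$-curve. On the other hand, the paper's inductive argument is more constructive: it tells you exactly how the configuration simplifies under each blow-down, which is what is later exploited in the proof of Theorem~\ref{nearmaxnodes} when analysing the cases $r=1,2$. Either method suffices for the lemma as stated.
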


\begin{proof} The proof is essentially contained in the proof of Proposition 4.1 of
\cite{DLP}.

Use induction on $r$.

When $r=0$, it is trivial.

Assume $r>0$. Write
$$K_X=\phi^*K_Y+E$$ and let $C_1, \ldots, C_{\mu(X)}$ be disjoint $(-2)$-curves on $X$.
For each $i$ there are 2 possibilities:
\begin{enumerate}
\item $C_i$ is exceptional
for $f$, hence $(\phi^*K_Y)C_i=0$. \item $C_i$ is not exceptional
for $\phi$. Then since $K_XC_i=0$ and $K_Y$ is nef, we see that
$(\phi^*K_Y)C_i=0$, $EC_i=0$, hence $C_i$ is disjoint from the
support of $E$. \end{enumerate}

Let $E_1$ be an irreducible $(-1)$-curve of $X$ and let $X_1$ be
the surface obtained by blowing down $E_1$. If $E_1$ does not
intersect any of the $C_i$'s, then the $C_i$'s give $\mu(X)$
disjoint $(-2)$-curves on $X_1$, hence $\mu(X)\le\mu(X_1)$ and the
statement follows by induction, i.e.,
$$\mu(X)\le \mu(X_1)\le\mu(Y)+\frac{r-1}{2}.$$

So assume that $E_1C_1>0$. By the above remark, this implies that
$C_1$ is exceptional for $\phi$. In particular, we have
$C_1E_1=1$. Notice that $E_1C_i=0$ for every $i>1$. Indeed, if,
say, $E_1C_2=1$, then the images of $C_1$ and $C_2$ in $X_1$ are
$(-1)$-curves that intersect, contradicting the assumption that
$\kappa(X_1)\ge 0$. Hence the image of $C_1$ in $X_1$ is a
$(-1)$-curve that can be contracted to get a surface $X_2$ with
$\mu(X)-1$ disjoint $(-2)$-curves, and again we get the result by
induction, i.e.,
$$\mu(X)-1\le \mu(X_2)\le\mu(Y)+\frac{r-2}{2}.$$
\end{proof}

Now we prove Theorem \ref{nearmaxnodes}. Let
$$f: X\to S$$ be the contraction morphism of the
disjoint nodal curves $C_1, \ldots, C_{\mu(X)}$, where
$\mu(X)=h^{1,1}(X)-2$.

Note first that $K_X=f^*K_S$. Thus $K_X$ is nef if and only if
$K_S$ is nef.

\medskip
Assume that  $K_X$ is nef.\\
Then we again apply the orbifold Bogomolov-Miyaoka-Yau inequality
(Theorem \ref{bmy}). In this case we have
$$K_S^2=K_X^2=12\{1-q(X)+p_g(X)\}-\{2-4q(X)+h^{1,1}(X)+2p_g(X)\},$$
$$e_{orb}(S)=e(S)-\frac{h^{1,1}(X)-2}{2}=4-4q(X)+2p_g(X)-\frac{h^{1,1}(X)-2}{2}.$$ Thus Theorem
\ref{bmy} implies that
$$12(1-q+p_g)-(2-4q+h^{1,1}+2p_g)\le 3\Big(4-4q+2p_g-\frac{h^{1,1}-2}{2}\Big),$$
and hence, $$4q+4p_g+\frac{h^{1,1}}{2}\le 5.$$ This inequality has
the following solutions:

\begin{itemize} \item (i) $q(X)=1$, $p_g(X)=0$, $h^{1,1}(X)=2$;
\item (ii) $q(X)=0$, $p_g(X)=1$, $h^{1,1}(X)=2$; \item (iii)
$q(X)=p_g(X)=0$, $2\le h^{1,1}(X)\le 10$.
\end{itemize}

Assume the case (i). In this case, $e(X)=0$ and $K_X^2=0$. Since
$X$ is a minimal surface of Kodaira dimension $\kappa(X)\ge 0$,
the classification theory of complex surfaces (cf. \cite{BHPV})
shows that $X$ belongs to the case (1-a) or (1-b).

Assume the case (ii). In this case, $e(X)=6$ and $K_X^2=18$.
Hence, $X$ is of general type. Since $3e(X)=K_X^2$, it is a ball
quotient. This gives the case (1-e).

Assume the case (iii). In this case, $e(X)=h^{1,1}(X)+2$ and
$K_X^2=10-h^{1,1}(X)$.

 If $h^{1,1}(X)=10$, then $e(X)= 12$ and
$K_X^2=0$. Since $X$ is a minimal surface of Kodaira dimension
$\kappa(X)\ge 0$, $X$ is an Enriques surface or a minimal surface
of Kodaira dimension $\kappa(X)=1$. This gives the case (1-c) and
(1-d). In the latter case, a fibre of the elliptic fibration of
$X$ is a rational multiple of $K_X$, hence the 8 nodal curves must
be contained in fibres of the elliptic fibration. By the formula
for computing the topological Euler number of a fibration (cf.
\cite{BHPV}, Chap. III) this is possible only if the reducible
fibres are two fibres of type $I_0^*$ and the eight nodal curves
are the end-components of these fibres.

 If $2\le h^{1,1}(X)\le 9$, then
$8\ge K_X^2\ge 1$. Since $X$ is a minimal surface of Kodaira
dimension $\kappa(X)\ge 0$, $X$ is of general type. By Theorem
\ref{maxnodes}, any minimal surface $X$ of general type with
$p_g(X)=0$ and $K_X^2=8$ cannot contain a $(-2)$-curve, hence
belongs to the case (1-f).

We claim that $K_X^2\neq 3, 5$. This can be proved by a lattice
theoretic argument. Let $L$ be the cohomology lattice
$H^2(X,\mathbb{Z})/$(torsion), which is an odd unimodular lattice
of signature $(1, h^{1,1}(X)-1)$. Let $M$ be the sublattice of $L$
generated by the classes of the nodal curves $C_1, \ldots,
C_{\mu(X)}$ where $\mu(X)=h^{1,1}(X)-2$.
 Consider the homomorphism of quadratic forms of finite abelian groups
 $$\tau: M/2M\to L/2L.$$
 Note that $M/2M\cong (\mathbb{Z}/2\mathbb{Z})^{\mu(X)}$ is totally
 isotropic, and $L/2L\cong (\mathbb{Z}/2\mathbb{Z})^{\mu(X)+2}$.
 Assume that $K_X^2=3$. Then $\mu(X)=5$, so the kernel $\ker(\tau)$
 must have length $\ge 2$. If
 $\sum_{j=1}^{k}C_{i_j}$(mod $2M$) $\in\ker(\tau)$, then
 $\sum_{j=1}^{k}C_{i_j}=2D+$torsion for some divisor $D$. Since $D\cdot K_X=0$, $D^2$ is an
 even integer. This implies that $k$ is a multiple of 4. This
 means that any non-trivial element of $\ker(\tau)$ is a sum of 4
 members of $C_1, \ldots, C_{5}$. But this is impossible since $\ker(\tau)$
 has length $\ge 2$. Assume that $K_X^2=5$. Then $\mu(X)=3$, so the kernel $\ker(\tau)$
 must have length $\ge 1$. But no linear combination of $C_1, C_2, C_3$ gives a non-trivial element of $\ker(\tau)$.
This gives the case (1-f).

\medskip
Assume that  $K_X$ is not nef and $\kappa(X)\ge 0$.\\ In this case
$X$ is not minimal. Consider the map $\phi:X\to Y$ to the minimal
model, and let $$r:=\rho(X)-\rho(Y).$$ By Lemma \ref{lp},
$$h^{1,1}(Y)+r-2=h^{1,1}(X)-2=\mu(X)\le\mu(Y)+\frac{r}{2}\le
h^{1,1}(Y)-1+\frac{r}{2},$$ hence $$r\le 2.$$ If $r=1$, then the
above inequality shows that
$$h^{1,1}(Y)-1=\mu(X)\le\mu(Y)+\frac{1}{2},$$ hence $\mu(Y)=
h^{1,1}(Y)-1$. So by Theorem \ref{maxnodes} $Y$ is a fake
projective plane and $\mu(X)=h^{1,1}(Y)-1=0$.\\ If $r=2$, then the
above inequality shows that
$$h^{1,1}(Y)=\mu(X)\le\mu(Y)+1\le
h^{1,1}(Y),$$ hence $\mu(Y)= h^{1,1}(Y)-1$. So by Theorem
\ref{maxnodes} $Y$ is a fake projective plane and
$\mu(X)=h^{1,1}(Y)=1$. This gives the case (2-a).

\medskip
Assume that  $\kappa(X)=-\infty$ and $X$ is irrational.\\ In this
case $X$ is an irrational ruled surface. If $X$ is relatively
minimal, then $\mu(X)=h^{1,1}(X)-2=0$. Assume that
$\mu(X)=h^{1,1}(X)-2>0$. The $(-2)$-curves must be contained in
the union of fibers of the Albanese fibration $a_X$ on $X$. Let
$\phi:X\to Y$ be the map to a relatively minimal irrational ruled
surface. Then
$$\mu(X)=h^{1,1}(X)-2=h^{1,1}(Y)+\rho(X)-\rho(Y)-2=\rho(X)-\rho(Y),$$
i.e. the number of disjoint nodal curves on $X$ is the same as the
the number of blowups from $Y$ to $X$. This is possible only if
the number of nodal curves contained in each reducible fibre of
$a_X$ is the same as the number of blowups on the corresponding
fibre of $Y$. The only possibility is that each reducible fibre of
$a_X$ is a string of three smooth rational curves
$(-2)\textrm{---}(-1)\textrm{---}(-2)$ obtained by blowing up
twice. This gives the case (2-b).

\medskip
Assume that  $\kappa(X)=-\infty$ and $X$ is rational.\\ This case
has been classified in \cite{DLP}, Theorem 3.3 and Remark 3. This
gives the cases (2-c), (2-d), (2-e), (2-f).

This completes the proof of Theorem \ref{nearmaxnodes}.

\begin{remark} (1) There are examples of the case (1-f) with $K^2=6, 4, 2$, as given in \cite{BCGP}.
In the paper they give a complete classification of the surfaces
$Y$ occurring as the minimal resolution of a surface
$Z:=(C_1\times C_2)/G$, where $G$ is a finite group with an
unmixed action on a product of smooth projective curves $C_1\times
C_2$ of respective genera $\ge 2$, and such that (i) $Z$ has only
rational double points as singularities, (ii) $q(Y)=p_g(Y)=0$. In
particular they show that $Z$ has only nodes as singularities, and
the number of nodes is even and equal to $t:=8-K_Z^2$ (see
Corollary 5.3, ibid). Furthermore, they give examples with
$t=2,4,6$. The case $t = 0$, i.e., $G$ acts freely on $C_1\times
C_2$, was completely classified in \cite{BCG}.

The cases $t=6,4$ can also be obtained as the quotient of a
minimal surface of general type with $K^2=8$ and $p_g=0$ by an
action of $(\mathbb{Z}/2\mathbb{Z})^{2}$, or by an action of
$\mathbb{Z}/2\mathbb{Z}$, where each non-trivial involution has
isolated fixed points only. This was confirmed by Ingrid Bauer.

(2) We do not know the existence of the case (1-f) with $K^2=1$,
i.e. a Godeaux surface with 7 disjoint nodal curves. However,
there is a possible construction of such an example. If one can
find a minimal surface of general type with $K^2=8$ and $p_g=0$
admitting an action of $(\mathbb{Z}/2\mathbb{Z})^{3}$, each of the
7 involutions having isolated fixed points only, then the quotient
has the minimal resolution with $K^2=1$, $p_g=0$, and 7 disjoint
nodal curves.

(3) We do not know the existence of the case (1-f) with $K^2=7$.
\end{remark}

\begin{remark}
The case (2-a) gives counterexamples to Proposition 4.1 of
\cite{DLP}. Indeed the authors, though their proof was correct,
overlooked the case of fake projective planes for the minimal
case, and consequently the case of blowups of fake projective
planes for the non-minimal case as they used induction on the
number of blowups from the minimal model. Thus the first statement
of their proposition holds true except for the case where Y is a
fake projective plane, and the second statement except for the
case where Y is the blowup of a fake projective plane at one point
or at two infinitely near points.
\end{remark}

\bigskip
{\bf Acknowledgements.} I thank Margarida Mendes Lopes and Rita
Pardini for helpful conversations and for allowing me to use their
result, Lemma \ref{lp}, which plays a key role in the proof of
\ref{nearmaxnodes}. I also thank Ingrid Bauer for informing me of
the result \cite{BCGP}. Finally I am grateful to the referee for
careful reading and for pointing out one missing case in the
statement of Theorem 1.4.

\bibliographystyle{amsplain}

\end{document}